\newcommand{\Integer}{\mathbb{Z}}
\newcommand{\Natural}{\mathbb{N}}
\newcommand{\Complex}{\mathbb{C}}
\newcommand{\todo}[1]{{\sffamily To do:}}
\newtheorem{theorem}{Theorem}
\newtheorem {lemma}{Lemma}
\newenvironment{proof}{{\flushleft \emph{Proof}:}}{\ding{110}}
\newenvironment{proofmain}{{\flushleft \emph{Proof of Theorem \ref{main}}:}}{\ding{110}}
\title{Gambler's ruin probability - a general formula}
\author{Guy Katriel\footnote{Email: katriel@braude.ac.il}\\ Department of Mathematics, ORT Braude College,\\ Karmiel, Israel\\}
\date{}
\begin{document}
\maketitle

\begin{abstract}
We derive an explicit formula for the probability of ruin of a gambler playing against an infinitely-rich adversary, when the games have
payoff given by a general integer-valued probability distribution.
\end{abstract}

\section{Introduction}

We consider the classical Gambler's Ruin Problem. A gambler, who starts with an initial wealth $M$ ($M\in \Natural$), plays a series of games,
where in game $t$ the gambler's payoff is a random variable $X_t$ (independent for different values of $t$) with
$$P(X_t=k)=p_k,\;\;k\in \Integer,\;\;-\nu\leq k<\infty.$$
$\nu>0$ is thus the maximal possible loss, and we assume $p_{-\nu}\neq 0$.
The series of games proceeds until the gambler's wealth is less than $\nu$, in which case the gambler must stop playing, and we say that the
gambler is ruined. We are interested in the probability of ruin $P_{ruin}$, which depends on the payoff distribution $\{p_k\}_{k=-\nu}^\infty$ and on
the initial wealth $M$. It is well-known that if the expected value of $X_t$ is non-positive then $P_{ruin}=1$, so that we can
assume that we are dealing with a favorable game
\begin{equation}\label{pos}E(X_t)=\sum_{k=-\nu}^\infty kp_k>0.\end{equation}
Another trivial case is when the gambler's initial wealth is less than $\nu$, so that ruin occurs immediately, so we henceforth assume
$M\geq \nu$.

We will derive a formula for $P_{ruin}$, which, surprisingly, seems not to be available in the literature, except for some very special cases (see discussion below).

We define the generating function
$$p(z)=\sum_{k=-\nu}^\infty p_k z^k.$$
Note that, since $\sum_{k=-\nu}^\infty p_k=1$, the series $p(z)$ defines a meromorphic function in the unit disk $|z|<1$ of the complex plane, with a unique pole, of order $\nu$, at $z=0$.

For integers $n>0,r\geq 0$, the complete symmetric polynomial of order $r$ in the variables $z_1,\cdots,z_n$ is defined as the sum of all products
of the variables $z_1,\cdots,z_n$ of degree $r$, that is
$$\Phi_{n,r}(z_1,\cdots,z_n)=\sum_{i_j\geq 0,\;i_1+\cdots+i_n=r}\prod_{j=1}^n z_j^{i_j}.$$

\begin{theorem}\label{main} The equation $p(z)=1$ has $\nu$ solutions (counting multiplicities) in the unit disk $|z|<1$ of the complex plane, which we denote by
$\eta_j$ ($1\leq j\leq \nu$).
The ruin probability is given by
\begin{equation}\label{mr}P_{ruin}(M)=\sum_{n=1}^\nu  \Phi_{n,M-n+1}(\eta_1,\cdots,\eta_n) \prod_{j=1}^{n-1} (1-\eta_j).\end{equation}
When the roots $\eta_1,\cdots,\eta_\nu$ are distinct, one can use the alternative expression
\begin{equation}\label{mra}P_{ruin}(M)=\sum_{j=1}^\nu \eta_j^M \prod_{i\neq j}\frac{1-\eta_i}{\eta_j-\eta_i}.\end{equation}
\end{theorem}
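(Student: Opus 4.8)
My plan is to reduce the problem to a boundary-value problem for a linear recurrence, write down the generating function of its solution explicitly, and pin that solution down by a martingale uniqueness argument; the two closed forms then come out by extracting Taylor coefficients.

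\textbf{The roots of $p(z)=1$.} The starting observation is that $p$ is strictly convex on $(0,\infty)$: indeed $p''(r)=\sum_{k\geq-\nu}k(k-1)p_k r^{k-2}$, and $k(k-1)\geq 0$ for every integer $k$, with a strictly positive term present since $p_{-\nu}\neq 0$ and $-\nu\notin\{0,1\}$. Since $p(r)\to\infty$ as $r\to0^+$, while $p(1)=1$ and $p'(1)=E(X_t)>0$ by \eqref{pos}, convexity forces $p(r)=1$ to have exactly one root $\eta_*\in(0,1)$ and $p(r)<1$ on $(\eta_*,1)$. Hence, for $r\in(\eta_*,1)$ and $|z|=r$,
$$|z^\nu p(z)|=\Big|\sum_{k\geq-\nu}p_k z^{k+\nu}\Big|\leq\sum_{k\geq-\nu}p_k r^{k+\nu}=r^\nu p(r)<r^\nu=|z^\nu|,$$
so Rouch\'e's theorem (comparing $z^\nu$ with $z^\nu p(z)$) shows $z^\nu-z^\nu p(z)$ has exactly $\nu$ zeros in $|z|<r$; letting $r\uparrow 1$ gives exactly $\nu$ in $|z|<1$. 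As $z^\nu-z^\nu p(z)\to -p_{-\nu}\neq 0$ when $z\to 0$, these $\nu$ zeros are precisely the (nonzero) solutions $\eta_1,\dots,\eta_\nu$ of $p(z)=1$.

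\textbf{A boundary-value problem, and uniqueness.} For $n\geq 0$ set $u_n:=P_{ruin}(n)$, so $u_n=1$ for $0\leq n<\nu$, and first-step analysis gives $u_n=\sum_{k\geq-\nu}p_k u_{n+k}$ for $n\geq\nu$. Because $p(\eta_*)=1$, the process $\eta_*^{\,S_t}$, where $S_t$ is the wealth at time $t$ (started from $n$), is a bounded martingale up to the ruin time $\tau$; optional stopping gives $\eta_*^{\,n}\leq u_n\leq\eta_*^{\,n-\nu+1}$, so $u_n\to 0$. The crucial point is uniqueness: any bounded sequence $(w_n)$ with $w_n=1$ for $0\leq n<\nu$, satisfying $w_n=\sum_k p_k w_{n+k}$ for $n\geq\nu$, and with $w_n\to0$, must equal $(u_n)$. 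Indeed $d:=u-w$ is bounded with $d_n=\sum_k p_k d_{n+k}$ for $n\geq\nu$, so $d_{S_{t\wedge\tau}}$ is a bounded martingale and converges a.s.\ and in $L^1$; its limit is $d_{S_\tau}=0$ on $\{\tau<\infty\}$ (as $d$ vanishes on $\{0,\dots,\nu-1\}$) and is $0$ on $\{\tau=\infty\}$ (as $S_t\to\infty$, the game being favorable, and $d_n\to0$); hence $d_n=\lim_t E(d_{S_{t\wedge\tau}})=0$.

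\textbf{Solving the recurrence.} Now I would write down the candidate generating function
$$U(z)=\frac{\prod_{j=1}^\nu(1-\eta_j z)-z^\nu\prod_{j=1}^\nu(1-\eta_j)}{(1-z)\,\prod_{j=1}^\nu(1-\eta_j z)}$$
and check that its coefficients $v_n=[z^n]U(z)$ solve the above boundary-value problem. First, the numerator vanishes at $z=1$, so $(1-z)$ cancels and $U$ is rational with poles only at the points $1/\eta_j$, all of modulus $>1$; hence $v_n\to 0$ geometrically. Second, $U(z)-\frac1{1-z}=-z^\nu\prod_j(1-\eta_j)\cdot\frac1{(1-z)\prod_j(1-\eta_j z)}=O(z^\nu)$, so $v_n=1$ for $0\leq n<\nu$. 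Third, in the partial-fraction expansion of $U$, $v_n$ is a linear combination of terms $\binom{n+s-1}{s-1}\eta_j^{\,n}$ with $s$ at most the order of $\eta_j$ as a zero of $p(z)-1$; each such term satisfies $\sum_k p_k(\cdot)_{n+k}=(\cdot)_n$ because $p-1$ vanishes to order $\geq s$ at $\eta_j$, and the two summations may be interchanged by absolute convergence; so $(v_n)$ satisfies the recurrence for $n\geq\nu$. By the uniqueness step $v_n=u_n$, hence $P_{ruin}(M)=[z^M]U(z)$.

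\textbf{Reading off the formulas.} Using $\frac1{(1-z)\prod_j(1-\eta_j z)}=\sum_{r\geq0}\Phi_{\nu+1,r}(1,\eta_1,\dots,\eta_\nu)\,z^r$ gives $P_{ruin}(M)=1-\prod_{j=1}^\nu(1-\eta_j)\,\Phi_{\nu+1,\,M-\nu}(1,\eta_1,\dots,\eta_\nu)$, and the telescoping identity $\Phi_{n,r}(a,1)-(1-\eta)\Phi_{n+1,\,r-1}(a,\eta,1)=\Phi_{n,r}(a,\eta)$ — immediate from $\sum_r\Phi_{n,r}(x_1,\dots,x_n)t^r=\prod_i(1-x_i t)^{-1}$ — repackages this into the sum \eqref{mr}. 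When the $\eta_j$ are distinct, the partial fraction of $U$ is simple and its residue at $z=1/\eta_j$ works out to $\prod_{i\neq j}\frac{1-\eta_i}{\eta_j-\eta_i}$, which is \eqref{mra}. I expect the genuinely delicate point to be the third check — marrying the algebraic form of $U$ with the analytic fact that each $\eta_j$ is a zero of $p(z)-1$ of exactly the right order — while the symmetric-function bookkeeping of the last step, though routine, will take some care, especially in the coincident-root case.
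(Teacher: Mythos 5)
Your argument is correct, but it follows a genuinely different route from the paper's. The paper never writes down the harmonic-function equations at all: it works with the temporal generating function $F_w(z)=\sum_t w^t f_t(z)$ of the (absorbed) wealth distributions, extracts $P_{ruin}$ as an Abelian limit $\lim_{w\to 1-}(1-w)T[F_w](1)$, and determines the degree-$(\nu-1)$ polynomial $T[F_w]$ by forcing the numerator of $F_w$ to vanish at the $\nu$ roots of $p(z)=w^{-1}$ in the disk, so that (\ref{mr}) and (\ref{mra}) drop out of Newton's and Lagrange's interpolation formulas respectively; the construction is purely analytic and \emph{produces} the answer. You instead set up the classical boundary-value problem $u_n=\sum_k p_k u_{n+k}$ for $n\ge\nu$, $u_n=1$ for $n<\nu$, $u_n\to 0$, prove uniqueness of its bounded solution by optional stopping, and then \emph{verify} a guessed spatial generating function $U(z)$. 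I checked the details you leave implicit: the Rouch\'e count (using the a priori bound $|z|\le\eta_*$, the same bound the paper uses with the argument principle), the partial-fraction verification of the recurrence (the multiplicity of $1/\eta_j$ as a pole of $U$ is by construction the order of $\eta_j$ as a zero of $p-1$, which is exactly what makes $\binom{n+s-1}{s-1}\eta_j^n$ harmonic), the telescoping of $1-\prod_j(1-\eta_j)\Phi_{\nu+1,M-\nu}(1,\eta_1,\dots,\eta_\nu)$ into (\ref{mr}), and the residue computation giving (\ref{mra}); all are sound. What your route buys is a clean intermediate closed form for $P_{ruin}(M)$ and all starting fortunes at once from a single rational function; what it costs is that it is guess-and-verify, and that it leans on probabilistic inputs the paper avoids entirely, namely transience ($S_t\to\infty$ on survival, needed both for $u_n\to 0$ and for the uniqueness step) and martingale convergence. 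Those facts are standard for a walk with positive (possibly infinite) drift and jumps bounded below, but the transience deserves to be stated and justified as a lemma rather than the aside ``the game being favorable.''
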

We note that empty products, as occurs in (\ref{mr}) for $n=1$, are to be interpreted as $1$. The two expressions (\ref{mr}),(\ref{mra}) are algebraically equivalent, but
each has its advantages. (\ref{mr}) allows for the case of multiple roots, and shows that the expression on the right-hand side of (\ref{mra}) is in fact a polynomial in $\eta_1,\cdots,\eta_\nu$. (\ref{mra}) clearly shows that  $P_{ruin}(M)$ is is exponentially decaying as a function of $M$ (a well-known fact), with the
rate of decay determined by the root $\eta_j$ with maximal absolute value.

The special case of the gambler's ruin problem in which
\begin{equation}\label{sc0}p_{-1}>0,\;p_1>0,\;\;\;p_k=0, \;\;|k|\geq 2,\end{equation}
is treated in every textbook of probability theory, and it is shown that
\begin{equation}\label{sc}P_{ruin}(M)=\Big(\frac{p_{-1}}{p_1}\Big)^M\end{equation}
(note that the assumption (\ref{pos}) implies $p_{-1}<p_1$). (\ref{sc}) is obtained as an especially simple case of (\ref{mr}), because
when (\ref{sc0}) holds we have $\nu=1$, $p(z)=p_{-1}z^{-1}+p_0+p_1 z$, so that $\eta_1=\frac{p_{-1}}{{p_1}}$, and (\ref{mr}) gives
$$P_{ruin}(M)= \Phi_{1,M}(\eta_1)=\eta_1^M=\Big(\frac{p_{-1}}{p_1}\Big)^M.$$
In general,  computing $P_{ruin}$ using (\ref{mr}) requires numerically solving $p(z)=1$  in the unit disk $|z|<1$.

The gambler's ruin problem, going back to Pascal and Huygens (see \cite{edwards} and \cite{ethier}, Sec. 7.5 for history), is usually considered together with a version in which the games also stop if the gambler's fortune exceeds a threshold $W$ (the gambler wins, or the casino is ruined). Our problem corresponds to $W=\infty$, and is often
called gambling against an infinitely rich adversary. The result (\ref{sc}) is usually obtained as a limit of an expression for finite $W$ when $W\rightarrow \infty$.

Feller (\cite{feller}, Sec. XIV.8) and Ethier (\cite{ethier}, Sec. 7.2) discuss the problem with an upper threshold $W$ in
the case of a payoff distribution $\{p_k\}_{k=-\nu}^{\mu}$, supported both from below and from above. 
They show that the ruin probabilities can be computed by solving a system of linear equations of size $W-\nu+1$ for $P_{ruin}(M)$ ($\nu\leq M\leq W$).
It is also possible to express $P_{ruin}(M)$ in the form
\begin{equation}\label{fel}P_{ruin}(M)=\sum_{j=1}^{\nu+\mu}a_j \lambda_j^M,\;\;\;\nu\leq M\leq W, \end{equation}
where $\lambda_j$ ($1\leq j\leq \nu+\mu$) are {\it{all}} the roots of $p(z)=1$,
with the coefficients $a_j$ determined by a linear system of equations of size $\nu+\mu$. Here again there is no general explicit expression for the solution,
so that efforts have been devoted to proving upper and lower bounds for $P_{ruin}$, either in terms of the largest root of the equation $p(z)=1$  \cite{ethier,feller}, or more recently in terms of the moments of the payoff distribution \cite{ethier1,hurlimann}.

One import of Theorem \ref{main} is that the problem with $W=\infty$ is simpler than the case $W<\infty$, in the sense that an explicit
formula for the ruin probability can be derived. Comparing (\ref{fel}) (for the case $W<\infty$) and (\ref{mra}) (for the case $W=\infty$),
we see that in both formulas we have linear combinations of the $M$-th powers of roots
of the equation $p(z)=1$, but in (\ref{fel}) the coefficients $a_j$ must be determined by solving a (potentially large) system
of linear equations, while in (\ref{mra}) everything is given explicitly. Note also that in (\ref{mra}) {\it{only}} those roots which are inside the unit disk are involved.
Our derivation does not use (\ref{fel}) or any other result for the case $W<\infty$ - we deal directly with the case $W=\infty$.
Our approach
relies on the construction of appropriate generating functions and their analysis.

A previous result which comes closest to the formula (\ref{mra}), in a very special case, was obtained by Skala \cite{skala}, who considered the case
$$p_{-\nu}>0,\;\;p_{\mu}>0,\;\;\;p_k=0,\;\;k\neq -\nu,\mu,$$
and showed that the formula (\ref{mra}) holds. Skala used the system of linear equations mentioned above for the case $W<\infty$, and a process
of going to the limit $W\rightarrow \infty$.

Let us remark that in some contexts `ruin' would be defined as the event in which the gambler's fortune drops to $0$ or less, rather than below $\nu$. Translating the results to this case is trivial: on the right-hand sides of (\ref{mr}),(\ref{mra}), $M$ will be replaced by $M+\nu-1$.

We conclude this introduction with a numerical example. Let us consider a game in which participation costs $\$ \nu$ and with a prize
which is Poisson distributed with mean $\nu+\epsilon$ ($\epsilon>0$). Then $p(z)=z^{-\nu}e^{(\nu+\epsilon)(z-1)}$. We take $\nu=3$ and $\epsilon=0.01$.
The solutions of $p(z)=1$ in the open unit disk (found numerically using MAPLE) are then given by: $z_1=0.993362,z_2=-.202699+.220049 i, z_3=\bar{z}_2$. Using (\ref{mra}) we obtain
$$P_{ruin}(3)=0.9900,\;\;P_{ruin}(10)=0.9456,\;\;P_{ruin}(50)=0.7245,$$
$$P_{ruin}(100)=0.5193,\;\;P_{ruin}(200)=0.2668,\;\;P_{ruin}(500)=0.0361.$$

\section{Proof of the ruin probability formula}
Defining $S_t$ to be the gambler's wealth after the $t$-th game, we have $S_0=M$, and, for all integer $t\geq 0$
\begin{equation}\label{ds}S_{t+1}=\left\{
        \begin{array}{ll}
          S_t+X_t, & S_t\geq \nu \\
          S_t, & 0\leq S_t<\nu
        \end{array}
      \right..
\end{equation}
We define the sequence $S_t$ so that if ruin occurs at some time $t_0$, then $S_t=S_{t_0}\in \{0,1,\cdots,\nu-1\}$ for all $t\geq t_0$, or, in other
words, $\{0,1,\cdots,\nu-1\}$ are absorbing states of the Markov process $S_t$.
Thus the probability of ruin at or before time $t$ is given by $\sum_{k=0}^{\nu-1} P(S_t=k)$, and the probability of ultimate ruin is
\begin{equation}\label{pr}P_{ruin}=\lim_{t\rightarrow \infty}\sum_{k=0}^{\nu-1}P(S_t=k).\end{equation}
For later use, we remark that another way to express (\ref{ds}) is
\begin{equation}\label{ds1}
P(S_{t+1}=k)=\left\{
               \begin{array}{ll}
                 \sum_{l=-\nu}^{k-\nu} p_l \cdot P(S_t=k-l), & k\geq \nu\\
                 P(S_t=k)+\sum_{l=-\nu}^{k-\nu} p_l \cdot P(S_t=k-l), & 0\leq k<\nu
               \end{array}
             \right..
\end{equation}

With each of the random variables $S_t$ we associate its generating function
\begin{equation}\label{gf}f_t(z)=E(z^{S_t})= \sum_{k=0}^\infty P(S_t=k)z^k.\end{equation}
Note that $f_0(z)=z^M$. Since the coefficients $P(S_t=k)$ are non-negative and their sum is $1$, the series (\ref{gf}) converges
uniformly in the closed unit disk $|z|\leq 1$ of the complex plane, and thus defines a continuous function in the closed unit disk, which is
holomorphic in the open unit disk, with
\begin{equation}\label{bf} |z|\leq 1\;\;\Rightarrow\;\;|f_t(z)|\leq 1.\end{equation}

Denoting by ${\cal{P}}$ the vector space of all power series, $f(z)=\sum_{k=0}^\infty c_k z^k $, we define the truncation operator $T: {\cal{P}}\rightarrow {\cal{P}}$,
$$T\left[\sum_{k=0}^\infty c_k z^k \right]=\sum_{k=0}^{\nu-1} c_k z^k.$$

\begin{lemma}\label{bas} The sequence of polynomials (of degree $\nu-1$) $\{T[f_t]\}_{t=0}^\infty$
converges to a polynomial $Q(z)$
\begin{equation}\label{Q}Q(z)=\lim_{t\rightarrow \infty}T[f_t](z),\;\;\;\forall z\in \Complex,\end{equation}
and
\begin{equation}\label{pr1}P_{ruin}=Q(1).\end{equation}
\end{lemma}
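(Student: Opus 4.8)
The plan is to establish the convergence of $T[f_t]$ directly from the recursion \eqref{ds1} restricted to the absorbing states $0\leq k<\nu$. For each such $k$, the recursion reads $P(S_{t+1}=k)=P(S_t=k)+\sum_{l=-\nu}^{k-\nu}p_l\cdot P(S_t=k-l)$; since $p_l\geq 0$ and all probabilities are non-negative, the sequence $t\mapsto P(S_t=k)$ is nondecreasing, and it is bounded above by $1$. Hence each coefficient $P(S_t=k)$ ($0\leq k<\nu$) converges to a limit $q_k\in[0,1]$ as $t\to\infty$. Since $T[f_t](z)=\sum_{k=0}^{\nu-1}P(S_t=k)z^k$ is a polynomial of fixed degree $\leq\nu-1$ whose $\nu$ coefficients each converge, $T[f_t]$ converges coefficientwise, hence uniformly on compact subsets of $\Complex$ (in fact pointwise everywhere, which for polynomials of bounded degree is the same thing), to the polynomial $Q(z)=\sum_{k=0}^{\nu-1}q_k z^k$. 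This proves \eqref{Q}.

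For \eqref{pr1}, I would combine the monotone convergence just established with the formula \eqref{pr}, $P_{ruin}=\lim_{t\to\infty}\sum_{k=0}^{\nu-1}P(S_t=k)$. Since the limit is over a finite sum of convergent sequences, it commutes with the sum: $P_{ruin}=\sum_{k=0}^{\nu-1}\lim_{t\to\infty}P(S_t=k)=\sum_{k=0}^{\nu-1}q_k=Q(1)$. Here I am using that $\lim_{t\to\infty}T[f_t](1)=Q(1)$, which is immediate from coefficientwise convergence evaluated at $z=1$.

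I anticipate essentially no serious obstacle: the only point requiring a little care is the justification of \eqref{pr} itself, but that is already stated in the text as following from the description of $S_t$ with $\{0,1,\ldots,\nu-1\}$ as absorbing states, so I may take it as given. The one thing worth emphasizing in the writeup is why the coefficients are monotone: it is precisely the second line of \eqref{ds1}, together with nonnegativity of the $p_l$ and of the $P(S_t=k-l)$, that gives $P(S_{t+1}=k)\geq P(S_t=k)$ for $0\leq k<\nu$. Monotonicity plus boundedness by $1$ is what makes the coefficientwise limits exist; everything else is the trivial observation that a limit of polynomials of bounded degree, taken coefficient by coefficient, is again such a polynomial and the convergence is pointwise on all of $\Complex$.
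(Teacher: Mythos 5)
Your proposal is correct and follows essentially the same route as the paper: monotonicity and boundedness of the coefficients $P(S_t=k)$ for $0\leq k<\nu$ give coefficientwise convergence of the degree-$(\nu-1)$ polynomials $T[f_t]$, and \eqref{pr1} then follows from \eqref{pr} by evaluating at $z=1$. The only difference is that you explicitly derive the monotonicity from the second line of \eqref{ds1}, a step the paper merely asserts.
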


\begin{proof} We have
\begin{equation}\label{y}T[f_t](z)=\sum_{k=0}^{\nu-1}P(S_t=k)z^k,\end{equation}
and since, for each $0\leq k\leq \nu-1$, $P(S_k=t)$ is increasing as a function of $t$ and bounded by $1$, we have
convergence of each of the coefficients of $T[f_t](z)$ as $t\rightarrow \infty$, so that the limiting degree-$(\nu-1)$ polynomial
exists, and (\ref{Q}) holds.
In view of (\ref{pr}), (\ref{Q}) and (\ref{y}), we have (\ref{pr1}).
\end{proof}

We will find $P_{ruin}$ by finding an explicit expression for $Q(z)$ and taking $z=1$. We note that the  coefficient of $z^k$ ($0\leq k \leq \nu-1$) in $Q(z)$ is the probability that ruin occurs with final fortune $k$.

We now derive a recursive formula relating $f_{t+1}$ to $f_t$.
\begin{lemma} \label{rr} For all $t\geq 0$,
\begin{equation}\label{rec}f_{t+1}(z)=T[f_t](z)+p(z)\Big[f_t(z)-T[f_t](z)\Big].\end{equation}
\end{lemma}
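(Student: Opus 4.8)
The plan is to verify the identity \eqref{rec} by comparing the coefficient of $z^k$ on both sides for each $k\geq 0$, using the transition rule \eqref{ds1}. The left-hand side has $z^k$-coefficient $P(S_{t+1}=k)$ by the definition \eqref{gf}. For the right-hand side, I would first write $f_t(z)-T[f_t](z)=\sum_{k=\nu}^\infty P(S_t=k)z^k$, the ``tail'' of the generating function corresponding to the non-absorbing states. Multiplying this tail by $p(z)=\sum_{l=-\nu}^\infty p_l z^l$ and extracting the coefficient of $z^k$ gives $\sum_{l}p_l\,P(S_t=k-l)$ where the sum runs over those $l$ with $k-l\geq\nu$, i.e. $l\leq k-\nu$, together with the constraint $l\geq-\nu$ from the support of $p$; this is precisely $\sum_{l=-\nu}^{k-\nu}p_l\,P(S_t=k-l)$. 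Adding back the $z^k$-coefficient of $T[f_t](z)$, which is $P(S_t=k)$ for $0\leq k<\nu$ and $0$ for $k\geq\nu$, reproduces exactly the two cases of \eqref{ds1}, so the coefficients match for every $k$ and the two power series are equal.

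A small point worth addressing carefully is the convergence and manipulation of the product $p(z)[f_t(z)-T[f_t](z)]$: although $p(z)$ has a pole of order $\nu$ at the origin, the factor $f_t(z)-T[f_t](z)$ vanishes to order $\nu$ at $z=0$ (its lowest-degree term is $P(S_t=\nu)z^\nu$), so the product is in fact a genuine power series with no negative powers, and the Cauchy product giving its coefficients is legitimate in the open unit disk. This is also why the resulting identity is an identity of honest generating functions in $\mathcal{P}$, consistent with the fact that $f_{t+1}\in\mathcal{P}$.

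I do not anticipate a serious obstacle here; the main thing is simply to be meticulous about the index bookkeeping in the Cauchy product and about the split into the two regimes $k\geq\nu$ and $0\leq k<\nu$. An alternative, essentially equivalent route would be probabilistic: condition on $S_t$, split into the event $S_t<\nu$ (absorbed, contributing $T[f_t]$ unchanged) and the event $S_t\geq\nu$ (one more game played, contributing a factor $E(z^{X_t})=p(z)$ by independence of $X_t$ from $S_t$), which yields \eqref{rec} directly from $f_{t+1}(z)=E(z^{S_{t+1}})$; but since the transition identity \eqref{ds1} has already been recorded, the coefficient comparison is the cleanest way to present the argument.
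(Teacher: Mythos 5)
Your proof is correct and follows essentially the same route as the paper: comparing the coefficient of $z^k$ on both sides via the Cauchy product and the transition identity \eqref{ds1}, with the same split into the regimes $k\geq\nu$ and $0\leq k<\nu$. Your added remark that $f_t(z)-T[f_t](z)$ vanishes to order $\nu$ at the origin, so that the product with $p(z)$ is a genuine power series, is a worthwhile clarification that the paper leaves implicit.
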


\begin{proof} We show that, for each integer $k\geq 0$, the coefficient of $z^k$ in the power series $f_{t+1}(z)$, is equal to
the coefficient of $z^k$ in the power series on the right-hand of (\ref{rec}). Using the notation $[z^k]g(z)$ to refer to the coefficient of $z^k$ in a
power series $g(z)$, we have
\begin{equation}\label{c1}[z^k]f_{t+1}(z)=P(S_{t+1}=k), \end{equation}
\begin{equation}\label{c2}[z^k](T[f_{t}](z))=\left\{
                                             \begin{array}{ll}
                                               0, & k\geq \nu\\
                                               P(S_t=k), & 0\leq k\leq \nu-1
                                             \end{array}
                                           \right.
, \end{equation}
\begin{equation}\label{c3}[z^k]\left( p(z)\Big[f_t(z)-T[f_t](z)\Big]\right)=\sum_{l=-\nu}^{k-\nu} p_l  \cdot P(S_t=k-l),
\end{equation}
and using (\ref{ds1}) we see that the right-hand side of (\ref{c1}) is equal to the sum of the right-hand sides of (\ref{c2}) and (\ref{c3}).
\end{proof}

Introducing the linear operator $L:{\cal{P}}\rightarrow {\cal{P}}$ defined by
\begin{equation}\label{L}L[f](z)=T[f](z)+p(z)\Big[f(z)-T[f](z)\Big],\end{equation}
Lemma \ref{rr} tells us that, for all $t\geq 0$,
\begin{equation}\label{it}f_{t+1}=L[f_t].\end{equation}

For each $w\in [0,1)$, we define the function
$$F_w(z)=\sum_{t=0}^\infty w^t f_t(z).$$
By (\ref{bf}), the above series converges uniformly in $|z|\leq 1$ for each $w\in [0,1)$, and since each $f_t$ is a continuous function in the closed unit disk, holomorphic in its interior,
$F_w$ also has these properties. We note also that the functions $w\rightarrow F_w(z)$ ($z$ fixed) are continuous, uniformly for $z$ in compact subsets of the unit disk,
which in particular implies that the mapping $w\rightarrow T[F_w](z)$ from $[0,1)$ to the space of degree-$(\nu-1)$ polynomials is continuous, a
fact that will be used below.

\begin{lemma}\label{llim} For any $z\in \Complex$ we have
\begin{equation}\label{lq}Q(z)=\lim_{w\rightarrow 1-}(1-w)T[F_w](z),\end{equation}
where $Q(z)$ is the polynomial defined by (\ref{Q}).
\end{lemma}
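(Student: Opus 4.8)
The plan is to exploit the fact that $T[F_w](z)=\sum_{t=0}^\infty w^t T[f_t](z)$ is, for each fixed $z$, an ordinary power series in $w$ whose coefficients are the numbers $T[f_t](z)$, and then to invoke an Abelian-type limit theorem. Concretely, fix $z\in\Complex$ and set $a_t=T[f_t](z)$. By Lemma \ref{bas} the sequence $a_t$ converges to $Q(z)$ as $t\to\infty$. The standard Abel summation fact states that if $a_t\to a$, then $\lim_{w\to 1-}(1-w)\sum_{t=0}^\infty w^t a_t=a$; applying this coordinatewise (i.e. to each of the $\nu$ coefficients of the degree-$(\nu-1)$ polynomial $T[f_t]$) yields exactly (\ref{lq}).

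The first step I would carry out is to justify that the series $\sum_{t=0}^\infty w^t T[f_t](z)$ genuinely equals $T[F_w](z)$: since $T$ acts on a power series merely by reading off finitely many (namely the first $\nu$) Taylor coefficients, and since $F_w=\sum_t w^t f_t$ converges uniformly on $|z|\le 1$ by (\ref{bf}), the coefficient extraction commutes with the sum, so $T[F_w]=\sum_t w^t T[f_t]$ as polynomials; this also shows the series on the right converges for $|w|<1$ because each coefficient sequence is bounded by $1$ (from (\ref{bf}), $|[z^k]f_t|\le 1$). The second step is the Abelian limit itself. I would prove it directly for a convergent scalar sequence $a_t\to a$: write $a=(1-w)\sum_{t\ge 0} w^t a$ and estimate
\[
\Big|(1-w)\sum_{t=0}^\infty w^t a_t - a\Big| \le (1-w)\sum_{t=0}^\infty w^t |a_t-a|,
\]
then split the sum at a large index $N$ chosen so that $|a_t-a|<\varepsilon$ for $t\ge N$, bounding the head by $(1-w)\cdot N\cdot \max_t|a_t-a|\to 0$ as $w\to 1-$ and the tail by $\varepsilon$. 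The third step is simply to apply this to each coefficient of the polynomial $T[f_t](z)$ (equivalently, observe that for polynomials of bounded degree, coefficientwise convergence is the same as convergence of the polynomial evaluated at any fixed $z$) to conclude (\ref{lq}) for every $z\in\Complex$.

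I do not expect a serious obstacle here; the only point requiring a little care is that Lemma \ref{bas} gives convergence of $T[f_t](z)$ only for $z$ in the closed unit disk a priori — but in fact the limit there is taken coefficientwise, so $Q$ is defined as a genuine polynomial and $T[f_t]\to Q$ pointwise on all of $\Complex$, which is exactly what the statement (\ref{Q}) already asserts. Thus the Abelian argument, applied to the (at most $\nu$) coefficient sequences, delivers the result for all $z\in\Complex$ with no restriction. The mild ``hard part'' is therefore just the bookkeeping of passing between the polynomial-valued statement and the scalar Abelian lemma, which the boundedness bound (\ref{bf}) makes routine.
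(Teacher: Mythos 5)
Your proposal is correct and follows essentially the same route as the paper: both identify $T[F_w](z)$ as a power series in $w$ with coefficients $T[f_t](z)$ and then apply an Abelian limit theorem to the convergent coefficient sequence. The only cosmetic difference is that you prove the Abelian-mean statement directly with an $\varepsilon$--$N$ split, whereas the paper telescopes the sequence into $c_t=T[f_t](z)-T[f_{t-1}](z)$ and cites Abel's theorem on power series; these are interchangeable formulations of the same fact.
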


\begin{proof} Fix $z$. Set
$$c_t=T[f_t](z)-T[f_{t-1}](z),\;\;t\geq 1.$$
By Lemma \ref{bas}, we have
$$\sum_{t=1}^\infty c_t=-T[f_0](z)+\lim_{t\rightarrow \infty}T[f_t](z)=Q(z)-T[f_0](z).$$
Therefore Abel's theorem on power series (\cite{rudin} Theorem 8.2) implies that
\begin{equation}\label{abel}\lim_{w\rightarrow 1-} \sum_{t=1}^\infty c_t w^t=Q(z)-T[f_0](z).\end{equation}
For any $w\in [0,1)$ we have
\begin{eqnarray}\label{kl}(1-w)\sum_{t=0}^\infty w^tT[f_t](z)&=& T[f_0](z)+\sum_{t=1}^\infty w^t\Big[T[f_t](z)-T[f_{t-1}](z)\Big] \nonumber\\ &=&T[f_0](z)+\sum_{t=1}^\infty c_t w^t.\end{eqnarray}
Taking the limit $w\rightarrow 1-$ in (\ref{kl}), using (\ref{abel}), we get
$$\lim_{w\rightarrow 1-} (1-w)\sum_{t=0}^\infty w^t T[f_t](z) =Q(z).$$
Since
$$T[F_w](z)=\sum_{t=0}^\infty w^t T[f_t](z),$$
we have (\ref{lq}).
\end{proof}

In view of Lemma \ref{llim}, we now want to find explicit expressions for the polynomials $T[F_w](z)$ ($w\in [0,1)$).
We need the following result on the solutions of the equation
\begin{equation}\label{zz}p(z)=w^{-1}.\end{equation}

\begin{lemma}\label{zeros}
For any $w\in (0,1]$, (\ref{zz}) has $\nu$ roots (counted with multiplicities) $z=z_j(w)$ ($1\leq j\leq \nu$) in the open unit disk $|z|<1$.
\end{lemma}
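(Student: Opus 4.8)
The plan is to count, via Rouch\'e's theorem, the zeros of the function
$$g_w(z)=z^\nu\big(p(z)-w^{-1}\big)=z^\nu p(z)-w^{-1}z^\nu$$
inside $|z|<1$. Since $p$ is meromorphic in the unit disk with a single pole, of order $\nu$, at the origin, the function $z^\nu p(z)$ is holomorphic there, so $g_w$ is holomorphic in $|z|<1$; moreover $g_w(0)=p_{-\nu}\neq 0$, so $z=0$ is not a zero, and for $z\neq 0$ we have $g_w(z)=0$ exactly when $p(z)=w^{-1}$. Hence it suffices to prove that $g_w$ has exactly $\nu$ zeros (counted with multiplicity) in $|z|<1$. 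I would compare $g_w$ on a circle $|z|=r$ (with $r<1$ close to $1$) with the function $-w^{-1}z^\nu$, which has exactly $\nu$ zeros --- all at the origin --- inside every such circle.

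Rouch\'e's theorem applies as soon as $|z^\nu p(z)|<|{-w^{-1}z^\nu}|=w^{-1}r^\nu$ on $|z|=r$, that is, as soon as $|p(z)|<w^{-1}$ there. Because all the coefficients $p_k$ are nonnegative, $|p(z)|\leq\sum_k p_k|z|^k=p(|z|)$ for $z$ in the domain, so it is enough to have $p(r)<w^{-1}$; and since $w^{-1}\geq 1$, it is enough to find $r<1$ arbitrarily close to $1$ with $p(r)<1$. This is the step that uses the favorability assumption (\ref{pos}). On $(0,1)$ the Laurent series of $p$ may be differentiated term by term, $p'(r)=\sum_k kp_k r^{k-1}$, and discarding the nonnegative terms with $k>N$ gives $p'(r)\geq\sum_{k=-\nu}^{N}kp_k r^{k-1}$ for every $N\geq 1$; the right-hand side tends to $\sum_{k=-\nu}^{N}kp_k$ as $r\to 1^-$, and for $N$ large this is positive, since these partial sums increase to $E(X_t)>0$. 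Hence $p'(r)>0$ on some interval $(r_0,1)$, so $p$ is strictly increasing there. As $p(r)\to 1$ when $r\to 1^-$ (the sum over $k\geq 0$ converges to $\sum_{k\geq 0}p_k$ by monotone convergence, and the sum over $k<0$ is finite), strict monotonicity forces $p(r)<1$ for all $r\in(r_0,1)$.

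Now fix any $r_1\in(r_0,1)$. The inequality $|p(z)|\leq p(r_1)<1\leq w^{-1}$ holds on $|z|=r_1$, so Rouch\'e's theorem yields exactly $\nu$ zeros of $g_w$ in $|z|<r_1$. It remains to check that $g_w$ has no zeros in the annulus $r_1\leq|z|<1$: for such $z$ one has $|p(z)|\leq p(|z|)<1\leq w^{-1}$ (using $r_0<|z|<1$ and the monotonicity), hence $p(z)\neq w^{-1}$, and $z$ being nonzero this gives $g_w(z)\neq 0$. Therefore $g_w$ has exactly $\nu$ zeros in $|z|<1$, none of them at the origin, which is exactly the statement that $p(z)=w^{-1}$ has $\nu$ roots (with multiplicity) in the open unit disk. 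I expect the second paragraph to be the crux: Rouch\'e's theorem is routine, but pinning down that the favorable-game hypothesis (\ref{pos}) really does push $p(r)$ below $1$ just to the left of $r=1$ is where the actual content sits --- and working strictly inside the disk in this way is also what lets the delicate endpoint $w=1$ (where $|p(z)|=1$ at boundary points such as $z=1$) be treated together with all other $w$.
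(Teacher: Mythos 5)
Your proof is correct and follows essentially the same route as the paper: both arguments rest on the bound $|p(z)|\leq p(|z|)$ (valid because the $p_k$ are nonnegative) together with the fact that $p(r)<1$ for real $r$ just below $1$, and then count the zeros of $z^\nu p(z)-w^{-1}z^\nu$ in the disk by the argument principle. The remaining differences are matters of packaging rather than substance: you apply Rouch\'e's theorem at fixed $w$ against the comparison function $-w^{-1}z^\nu$, where the paper instead deforms the equation $z^\nu=wh(z)$ continuously from $w=0$ (the same estimate is what keeps roots off the contour in either version), and you obtain $p(r)<1$ near $r=1$ from eventual monotonicity of $p$ on $(r_0,1)$, where the paper uses convexity of $p$ on $(0,1]$ to produce the explicit threshold $z^*$.
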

\begin{proof}
We first derive an a-priori bound for all solutions.
Restricting the function $p(z)$ to the interval $(0,1]$, we have, using the fact that $p_k$ are non-negative, that $p(z)$ a convex function with $\lim_{z\rightarrow 0+}p(z)=+\infty$, $p(1)=1$, $p'(1)=E(X_t)>0$, so that
elementary calculus implies that there is a unique $z^*\in (0,1)$ with
\begin{equation}\label{te}p(z^*)=1,\;\;p(z)<1 \;{\mbox{for}} \;z\in (z^*,1).\end{equation}
We claim that any solution $|z|<1$
of $p(z)=w^{-1}$, $w\in (0,1]$, satisfies $|z|\leq z^*$. Indeed if $z$ satisfies $p(z)=w^{-1}$ then
$$p(|z|)= \sum_{k=-\nu}^\infty p_k |z|^k  \geq \Big|\sum_{k=-\nu}^\infty p_k z^k \Big| = |p(z)| = w^{-1} \geq 1,$$
which, by (\ref{te}), implies $|z|\leq z^*$.

Defining
$$h(z)=z^\nu p(z)=\sum_{k=0}^\infty p_{k-\nu}z^k,$$
which is continuous on the closed unit disk and holomorphic in the open unit disk,
we can write the equation $p(z)=w^{-1}$ in the form
\begin{equation}\label{aw}z^\nu=wh(z).\end{equation}

When $w=0$,  $z=0$ is a zero of multiplicity $\nu$ of (\ref{aw}).
Since, as shown above, all solutions of (\ref{aw}) in the unit disk satisfy the $|z|<z^*<1$, we conclude by the argument principle that (\ref{aw}) has
$\nu$ solutions for all $w\in [0,1]$.
\end{proof}

\begin{lemma} For any $w\in (0,1)$, $|z|\leq 1$,
\begin{equation}\label{tn} T[F_w](z)=\frac{1}{1-w}\sum_{n=1}^\nu  \Phi_{n,M-n+1}(z_1(w),\cdots,z_n(w)) \prod_{j=1}^{n-1} (z-z_j(w)),\end{equation}
where $z_j(w)$ ($1\leq j\leq \nu$) are the solutions of $p(z)=w^{-1}$ in the unit disk $|z|<1$, ordered arbitrarily.

If we further assume that $z_j(w)$ ($1\leq j\leq \nu$) are different from each other then we can also write
\begin{equation}\label{tna}T[F_w](z)=\frac{1}{1-w}\sum_{j=1}^\nu (z_j(w))^M \prod_{i\neq j}\frac{z-z_i(w)}{z_j(w)-z_i(w)}.\end{equation}
\end{lemma}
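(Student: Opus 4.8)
The plan is to convert the recursion $f_{t+1}=L[f_t]$ from (\ref{it}) into a single functional equation for $F_w$, and then to use the holomorphy of $F_w$ inside the unit disk to pin down the degree-$(\nu-1)$ polynomial $T[F_w]$.

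\emph{Step 1: a functional equation for $F_w$.} I would multiply $f_{t+1}=L[f_t]$ by $w^t$ and sum over $t\geq 0$. This is legitimate because $L$ is linear and acts through the truncation $T$ and multiplication by the fixed function $p$, both of which commute with a locally uniformly convergent power series — here one uses that $f-T[f]$ is divisible by $z^\nu$, so that $p(z)(f-T[f])=h(z)\cdot z^{-\nu}(f-T[f])$ is a genuine power series. The result is $w^{-1}(F_w(z)-z^M)=L[F_w](z)=T[F_w](z)+p(z)\big(F_w(z)-T[F_w](z)\big)$. Rearranging and multiplying through by $z^\nu$, using $z^\nu p(z)=h(z)$, yields the identity of functions holomorphic in $|z|<1$ and continuous on $|z|\leq 1$:
\[(z^\nu-wh(z))\,F_w(z)=z^{M+\nu}+w\,(z^\nu-h(z))\,T[F_w](z).\]

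\emph{Step 2: reading off interpolation conditions.} By Lemma \ref{zeros} and its proof, the factor $z^\nu-wh(z)$ vanishes, with multiplicity, at the $\nu$ points $z_j(w)$, all of which satisfy $|z_j(w)|\leq z^*<1$ — so they lie in the open disk where $F_w$ is holomorphic — and are nonzero (since $h(0)=p_{-\nu}\neq 0$ and $w>0$). Hence the left-hand side above vanishes at each $z_j(w)$ to its multiplicity, and so must the right-hand side polynomial $z^{M+\nu}+w(z^\nu-h(z))T[F_w](z)$. Assuming first that the $z_j(w)$ are distinct, I evaluate at $z=z_j(w)$ and use $z_j(w)^\nu=wh(z_j(w))$, which gives $z_j^\nu-h(z_j)=(w-1)z_j^\nu/w$; dividing by $z_j(w)^\nu\neq 0$ produces the $\nu$ conditions $T[F_w](z_j(w))=z_j(w)^M/(1-w)$. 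Since $\deg T[F_w]\leq\nu-1$, it is the unique polynomial of that degree with those values, i.e. its Lagrange form — which is exactly (\ref{tna}).

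\emph{Step 3: identifying (\ref{tn}) and dropping distinctness.} For distinct roots, the right-hand side of (\ref{tn}) is the same interpolant: it is a polynomial in $z$ of degree $\leq\nu-1$ (the $n$-th term has $z$-degree $n-1$), and at $z=z_k(w)$ every term with $n>k$ vanishes because $\prod_{j=1}^{n-1}(z_k-z_j)$ contains the factor $z_k-z_k$; its value there is $\tfrac{1}{1-w}\sum_{n=1}^k\Phi_{n,M-n+1}(z_1,\ldots,z_n)\prod_{j=1}^{n-1}(z_k-z_j)=z_k^M/(1-w)$ by the classical Newton (divided-difference) expansion of the monomial $z^M$, whose $n$-th divided difference at $z_1,\ldots,z_n$ is the complete symmetric polynomial $\Phi_{n,M-n+1}(z_1,\ldots,z_n)$ (checked by induction on $k$, or from $\sum_{r\geq 0}\Phi_{n,r}(z_1,\ldots,z_n)t^r=\prod_{j=1}^n(1-z_jt)^{-1}$). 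To extend (\ref{tn}) to all $w\in(0,1)$, I note that $z^\nu-wh(z)$ has a multiple zero in $|z|<1$ for only finitely many $w$: a multiple zero $z_0$ satisfies $z_0^\nu=wh(z_0)$ and $\nu z_0^{\nu-1}=wh'(z_0)$, forcing $z_0h'(z_0)-\nu h(z_0)=0$, a $w$-independent holomorphic equation with value $-\nu p_{-\nu}\neq 0$ at $z_0=0$, hence with finitely many roots in the compact disk $|z|\leq z^*$, each giving at most one bad $w=z_0^\nu/h(z_0)$. On the cofinite set of remaining $w$, (\ref{tn}) holds by the above; both sides are continuous in $w$ on $(0,1)$ — the left side by the remark preceding Lemma \ref{llim}, the right side because it is a continuous symmetric function of the roots $z_j(w)$ (symmetric because it coincides with the Lagrange interpolant on all distinct tuples), whose power sums $\sum_j z_j(w)^m=\tfrac{1}{2\pi i}\int_{|z|=r}z^m\tfrac{\nu z^{\nu-1}-wh'(z)}{z^\nu-wh(z)}\,dz$ (any $z^*<r<1$) vary continuously with $w$. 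Hence (\ref{tn}) holds for all $w\in(0,1)$; as both sides are polynomials in $z$ of degree $\leq\nu-1$, this is an identity for all $z$, in particular for $|z|\leq 1$.

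The main obstacle is precisely this passage from the ``value at $z_j(w)$'' information — all that holomorphy of $F_w$ delivers directly, and already enough for the distinct case via Lagrange interpolation — to the statement for every $w\in(0,1)$: one either pushes the vanishing-to-order-$m_j$ analysis through using confluent divided differences, or (as above) proves the formula off a finite exceptional set and closes by continuity in $w$. A minor bookkeeping point is the justification that $L$ commutes with the summation defining $F_w$, which is exactly where divisibility of $f-T[f]$ by $z^\nu$ enters.
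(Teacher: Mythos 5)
Your proof is correct and follows essentially the same route as the paper: derive the functional equation for $F_w$ from the recursion $f_{t+1}=L[f_t]$, use holomorphy of $F_w$ in the disk to force the interpolation conditions $T[F_w](z_j(w))=z_j(w)^M/(1-w)$, write the degree-$(\nu-1)$ interpolant in Newton form (via the divided-difference identity for $z^M$) and Lagrange form, and extend from distinct roots to all $w$ by continuity. Your variants --- clearing denominators with $z^\nu$, locating the exceptional $w$ via $zh'(z)-\nu h(z)=0$ rather than via critical values of $p$, and justifying continuity of the right-hand side through symmetry and power sums --- are sound and merely supply details the paper leaves implicit.
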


\begin{proof} Using the linearity of the operator $L$ defined by (\ref{L}), and the relation (\ref{it}), we have
$$L[F_w](z)=\sum_{t=0}^\infty w^tL[f_t](z)=\sum_{t=0}^\infty w^tf_{t+1}(z)=w^{-1}\sum_{t=1}^\infty w^tf_{t}(z)=w^{-1}[F_w(z)-f_0(z)],$$
which we can write as
$$F_w(z)=w L[F_w](z)+f_0(z),$$
or, using the definition of $L$,
$$F_w(z)=wT[F_w](z)+wp(z)\Big[F_w(z)-T[F_w](z)\Big]+f_0(z).$$
Solving for $F_w$, we have
\begin{equation}\label{fw}F_w(z)=\frac{w(1-p(z))T[F_w](z)+f_0(z)}{1-p(z)w}.\end{equation}
Since $F_w(z)$ is holomorphic for $|z|<1$, the numerator of the right-hand side of (\ref{fw}) must vanish
whenever the denominator does (otherwise we would have a pole), that is
$$|z|<1,\;1-p(z)w=0\;\;\Rightarrow\;\; w(1-p(z))T[F_w](z)+f_0(z)=0.$$
In other words, denoting by $z_j(w)$ ($1\leq j\leq \nu$) the solutions of (\ref{zz}) (recall Lemma \ref{zeros}) we have
$$w(1-p(z_j(w)))T[F_w](z_j(w))+f_0(z_j(w))=0,\;\;\;1\leq j\leq \nu,$$
and using $wp(z_j(w))=1$, we can rewrite this as
\begin{equation}\label{tt}T[F_w](z_j(w))=\frac{1}{1-w}f_0(z_j(w)),\;\;\;1\leq j\leq \nu.\end{equation}

We will now temporarily assume that $w$ is chosen so that $z_j(w)$ ($1\leq j\leq \nu$) are {\it{distinct}}. This holds for
all but a discrete set of values of $w\in [0,1)$, since if $p(z)=w^{-1}$ has a multiple root $z$ then $p'(z)=0$, so that $w^{-1}$ is a critical value of $p$, and
the set of critical values of a nonconstant holomorphic function is discrete.

$T[F_w](z)$ is a polynomial of degree $\nu-1$, and since (\ref{tt}) prescribes the values of
this polynomial at the $\nu$ points $z_j(w)$ ($1\leq j\leq \nu$), we can use Newton's interpolation formula \cite{phillips} to write this polynomial as
\begin{equation}\label{et}T[F_w](z)=\frac{1}{1-w}\sum_{n=1}^\nu f_0[z_1(w),\cdots,z_n(w)]\prod_{j=1}^{n-1} (z-z_j(w)),\end{equation}
where $f[z_1(w),\cdots,z_n(w)]$ is the $n$-th divided difference. We now use the following result (\cite{phillips}, Theorem 1.2.1): when $f_0(z)=z^M$ and
$0\leq n\leq M$,
\begin{equation}\label{phil}f_0[z_1,\cdots,z_n]=\Phi_{n,M-n+1}(z_1,\cdots,z_n).\end{equation}
Substituting (\ref{phil}) into (\ref{et}), we obtain (\ref{tn}).

We have proved (\ref{tn}) under the assumption that $z_j(w)$ are distinct, but we may now approximate any value of $w$ for which some $z_j(w)$
coincide by values of $w$ for which $z_j(w)$ are distinct, and use the continuity of both sides of (\ref{tn}) to conclude that it is valid
for all $w$.

If instead of using Newton's interpolation formula we use Lagrange's formula, we obtain (again under the assumption that $z_j(w)$ are distinct)
$$T[F_w](z)=\frac{1}{1-w}\sum_{j=1}^\nu f_0(z_j(w)) \prod_{i\neq j}\frac{z-z_i(w)}{z_j(w)-z_i(w)}.$$
Recalling that $f_0(z)=z^M$, we get (\ref{tn}). This formula, of course, does not make sense when $z_j(w)$ are not distinct.
\end{proof}

We are now ready to obtain the
\begin{proofmain}
Fixing any $|z|\leq 1$, (\ref{lq}) and (\ref{tn}) imply,
\begin{eqnarray}\label{ee1}Q(z)&=&\lim_{w\rightarrow 1-}(1-w)T[F_w](z)\\
&=&\lim_{w\rightarrow 1-}\frac{1}{1-w}\sum_{n=1}^\nu  \Phi_{n,M-n+1}(z_1(w),\cdots,z_n(w)) \prod_{j=1}^{n-1} (z-z_j(w))\nonumber. \end{eqnarray}
By Lemma \ref{zeros}, we have
\begin{eqnarray}\label{ee2}&&\lim_{w\rightarrow 1-}\frac{1}{1-w}\sum_{n=1}^\nu  \Phi_{n,M-n+1}(z_1(w),\cdots,z_n(w)) \prod_{j=1}^{n-1} (z-z_j(w))\\&=&\frac{1}{1-w}\sum_{n=1}^\nu  \Phi_{n,M-n+1}(\eta_1,\cdots,\eta_n) \prod_{j=1}^{n-1} (z-\eta_j),\nonumber\end{eqnarray}
where $\eta_j$ ($1\leq j\leq \nu$) are the roots of $p(z)=1$ in the open unit disk. Combining (\ref{ee1}) and (\ref{ee2}) we have
$$Q(z)=\frac{1}{1-w}\sum_{n=1}^\nu  \Phi_{n,M-n+1}(\eta_1,\cdots,\eta_n) \prod_{j=1}^{n-1} (z-\eta_j),$$
and setting $z=1$ and using (\ref{pr1}) gives (\ref{mr}).

If we assume that $\eta_1,\cdots,\eta_\nu$ are distinct, which implies that $z_1(w),\cdots,z_\nu(w)$ are distinct for $w$ sufficiently close to $1$,
then the same argument, using (\ref{tna}) instead of (\ref{tn}), leads to
$$Q(z)=\sum_{j=1}^\nu \eta_j^M \prod_{i\neq j}\frac{z-\eta_i}{\eta_j-\eta_i},$$
and substituting $z=1$ gives (\ref{mra}).
\end{proofmain}

\end{document}